\newtheorem{theorem}{Theorem}
\theoremstyle{plain}
\newtheorem{corollary}{Corollary}
\newtheorem{lemma}{Lemma}
\newtheorem{proposition}{Proposition}
\newtheorem{remark}{Remark}
\numberwithin{equation}{section}
\begin{document}
\title{CLASSIFICATION OF $3$-DIMENSIONAL CONFORMALLY FLAT
QUASI-PARA-SASAKIAN MANIFOLDS}
\author{I. K\"{u}peli Erken}
\address{Faculty of Engineering and Natural Sciences, Department of
Mathematics, Bursa Technical University, Bursa, TURKEY}
\email{irem.erken@btu.edu.tr}
\date{05.03.2018}
\subjclass[2010]{Primary 53B30, Secondary 53D10, 53D15}
\keywords{quasi-para-Sasakian manifold, conformally flat, constant curvature.%
}

\begin{abstract}
The object of the present paper is to study $3$-dimensional conformally flat
quasi-Para-Sasakian manifolds. First, the necessary and sufficient
conditions are provided for $3$-dimensional quasi-Para-Sasakian manifolds to
be conformally flat. Next, a characterization of $3$-dimensional conformally
flat quasi-Para-Sasakian manifold with $\beta =$const. is given.
\end{abstract}

\maketitle


\section{I\textbf{ntroduction}}

Almost paracontact metric structures are the natural odd-dimensional
analogue to almost paraHermitian structures, just like almost contact metric
structures correspond to the almost Hermitian ones. The study of almost
paracontact geometry was introduced by Kaneyuki and Williams in \cite%
{kaneyuki1} and then it was continued by many other authors. A systematic
study of almost paracontact metric manifolds was carried out in paper of
Zamkovoy, \cite{Za}. Comparing with the huge literature in almost contact
geometry, it seems that there are necessary new studies in almost
paracontact geometry. Therefore, paracontact metric manifolds have been
studied in recent years by many authors, emphasizing similarities and
differences with respect to the most well known contact case. Interesting
papers connecting these fields are (see, for example, \cite{DACKO}, \cite%
{Biz}, \cite{Welyczko}, \cite{Za}, and references therein).

Z. Olszak studied normal almost contact metric manifolds of dimension $3$ 
\cite{olszak}. He derive certain necessary and sufficient conditions for an
almost contact metric structure on manifold to be normal and curvature
properties of such structures and normal almost contact metric structures on
a manifold of constant curvature are studied. Recently, J. We\l yczko
studied curvature and torsion of Frenet-Legendre curves in $3$-dimensional
normal almost paracontact metric manifolds \cite{Welyczko1} and then normal
almost paracontact metric manifolds are studied by \cite{bejan}, \cite{irem1}%
, \cite{irem2}.

The notion of quasi-Sasakian manifolds, introduced by D. E. Blair in \cite%
{BL1}, unifies Sasakian and cosymplectic manifolds. By definition, a
quasi-Sasakian manifold is a normal almost contact metric manifold whose
fundamental $2$-form $\Phi :=g(\cdot ,\phi \cdot )$ is closed.
Quasi-Sasakian manifolds can be viewed as an odd-dimensional counterpart of
Kaehler structures. These manifolds studied by several authors(e.g. \cite%
{KANEMAKI}, \cite{OL1}, \cite{ols2}, \cite{TAN1}).

Although quasi-Sasakian manifolds were studied by several different authors
and are considered a well-established topic in contact Riemannian geometry,
To the authors knowledge, there do not exist any study about
quasi-Para-Sasakian manifolds.

Motivated by these considerations, in \cite{irem3}, the author make the
first contribution to investigate basic properties and general curvature
identities of quasi-Para-Sasakian manifolds.

In this paper, we study $3$-dimensional conformally flat quasi-Para-Sasakian
manifolds.

The paper is organized as follows:

Section $2$ is preliminary section, where we recall the definition of almost
paracontact metric manifold and quasi-Para-Sasakian manifolds.

In Section $3$, we mainly proved that for a $3$-dimensional
quasi-Para-Sasakian manifold $M$ with $\beta =$const., the followings are
equivalent.

$i)$ $M$\ is locally symmetric.

$ii)$ $M$\ is conformally flat and its scalar curvature $\tau $\ is const.,

$iii)$ $M$\ is conformally flat and $\beta $\ is const.,

$iv)\bullet $ If $\beta =0$, then $M$ is a paracosymplectic manifold which
is locally a product of the real line $R$ and a $2$-dimensional
para-Kaehlerian manifold or

$\bullet $If $\beta \neq 0$, then $M$ is of constant negative curvature and
the quasi-para-Sasakian structure can be obtained by a homothetic
deformation of a para-Sasakian structure.

\section{Preliminaries}

Let $M$ be a $(2n+1)$-dimensional differentiable manifold and $\phi $ is a $%
(1,1)$ tensor field, $\xi $ is a vector field and $\eta $ is a one-form on $%
M.$ Then $(\phi ,\xi ,\eta )$ is called an \textit{almost paracontact
structure} on $M$ if

\begin{itemize}
\item[(i)] $\phi^2 = Id-\eta\otimes\xi, \quad \eta(\xi)=1$,

\item[(ii)] the tensor field $\phi $ induces an almost paracomplex structure
on the distribution $D=$ ker $\eta ,$ that is the eigendistributions $D^{\pm
},$ corresponding to the eigenvalues $\pm 1$, have equal dimensions, $\text{%
dim}\,D^{+}=\text{dim}\,D^{-}=n$.
\end{itemize}

The manifold $M$ is said to be an \textit{almost paracontact manifold} if it
is endowed with an almost paracontact structure \cite{Za}.

Let $M$ be an almost paracontact manifold. $M$ will be called an \textit{%
almost} \textit{paracontact metric manifold} if it is additionally endowed
with a pseudo-Riemannian metric $g$ of a signature $(n+1,n)$, i.e.%
\begin{equation}
g(\phi X,\phi Y)=-g(X,Y)+\eta (X)\eta (Y).  \label{1}
\end{equation}

For such manifold, we have 
\begin{equation}
\eta (X)=g(X,\xi ),\text{ }\phi (\xi )=0,\text{ }\eta \circ \phi =0.
\label{2}
\end{equation}

Moreover, we can define a skew-symmetric tensor field (a $2$-form) $\Phi $ by%
\begin{equation}
\Phi (X,Y)=g(X,\phi Y),  \label{3}
\end{equation}%
usually called \textit{fundamental form}.

For an almost paracontact manifold, there exists an orthogonal basis $%
\{X_{1},\ldots ,X_{n},Y_{1},\ldots ,$ $Y_{n},\xi \}$ such that $%
g(X_{i},X_{j})=\delta _{ij}$, $g(Y_{i},Y_{j})=-\delta _{ij}$ and $Y_{i}=\phi
X_{i}$, for any $i,j\in \left\{ 1,\ldots ,n\right\} $. Such basis is called
a $\phi $\textit{-basis}.

On an almost paracontact manifold, one defines the $(1,2)$-tensor field $%
N^{(1)}$ by%
\begin{equation}
N^{(1)}(X,Y)=\left[ \phi ,\phi \right] (X,Y)-2d\eta (X,Y)\xi ,  \label{nijen}
\end{equation}%
where $\left[ \phi ,\phi \right] $ is the \textit{Nijenhuis torsion} of $%
\phi $%
\begin{equation*}
\left[ \phi ,\phi \right] (X,Y)=\phi ^{2}\left[ X,Y\right] +\left[ \phi
X,\phi Y\right] -\phi \left[ \phi X,Y\right] -\phi \left[ X,\phi Y\right] .
\end{equation*}

If $N^{(1)}$ vanishes identically, then the almost paracontact manifold
(structure) is said to be \textit{normal} \cite{Za}. The normality condition
says that the almost paracomplex structure $J$ defined on $M\times 
\mathbb{R}
$%
\begin{equation*}
J(X,\lambda \frac{d}{dt})=(\phi X+\lambda \xi ,\eta (X)\frac{d}{dt}),
\end{equation*}%
is integrable.

If $d\eta (X,Y)=$ $g(X,\phi Y)$, then $(M,\phi ,\xi ,\eta ,g)$ is said to be 
\emph{paracontact metric manifold.} In a paracontact metric manifold one
defines a symmetric, trace-free operator $h=\frac{1}{2}{\mathcal{L}}_{\xi
}\phi $, where $\mathcal{L}_{\xi }$, denotes the Lie derivative. It is known 
\cite{Za} that $h$ anti-commutes with $\phi $ and satisfies $h\xi =0,$ tr$h=$%
tr$h\phi =0$ and $\nabla \xi =-\phi +\phi h,$ where $\nabla $ is the
Levi-Civita connection of the pseudo-Riemannian manifold $(M,g)$.

Moreover $h=0$ if and only if $\xi $ is Killing vector field. In this case $%
(M,\phi ,\xi ,\eta ,g)$ is said to be a \emph{K-paracontact manifold}.
Similarly as in the class of almost contact metric manifolds \cite{blair2}$,$
a normal almost paracontact metric manifold will be called \textit{%
para-Sasakian }if $\Phi =d\eta $ \cite{erdem}.

Now, we will give some results about $3$-dimensional quasi-para-Sasakian
manifolds that we will use next section.

\begin{proposition}
\cite{Welyczko1}\label{J3} For a $3$\textit{-dimensional almost paracontact
metric manifold }$M$\textit{\ the following three conditions are mutually
equivalent}
\end{proposition}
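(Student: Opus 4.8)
The plan is to establish the trichotomy as a cycle of implications, isolating the single genuinely nontrivial link and reducing the remaining two to short algebraic manipulations. Expressing both $N^{(1)}$ and the fundamental $2$-form $\Phi$ through the Levi-Civita connection, the backbone of the argument is the general pointwise identity giving $(\nabla_X\phi)Y$ on an arbitrary almost paracontact metric manifold in terms of $d\Phi$, $\eta$, and $N^{(1)}$ (the paracontact analogue of the classical Koszul-type formula $2g((\nabla_X\phi)Y,Z)=3d\Phi(X,\phi Y,\phi Z)-3d\Phi(X,Y,Z)+g(N^{(1)}(Y,Z),\phi X)+\ldots$). Once this is available, everything reduces to linear algebra on the $3$-dimensional tangent space, where the $\phi$-basis $\{e_{1},\phi e_{1},\xi\}$ with $g(e_{1},e_{1})=1$, $g(\phi e_{1},\phi e_{1})=-1$, $g(\xi,\xi)=1$ makes the admissible shapes of $\nabla\phi$ very rigid.

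First I would dispose of the equivalence between the $\nabla\phi$-formula (condition (ii)) and the $\nabla\xi$-formula (condition (iii)). Assuming (ii), I set $Y=\xi$ and use $\phi\xi=0$, $\eta\circ\phi=0$, and $\Phi(X,\xi)=g(X,\phi\xi)=0$ together with $(\nabla_X\phi)\xi=-\phi(\nabla_X\xi)$; applying $\phi$ once more and invoking $\phi^{2}=\mathrm{Id}-\eta\otimes\xi$, $\phi^{3}=\phi$, and $\eta(\nabla_X\xi)=0$ (the last obtained by differentiating $g(\xi,\xi)=1$) collapses the identity to the stated expression for $\nabla_X\xi$ with the very same functions $\alpha,\beta$. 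Conversely, starting from (iii) and the metric compatibility of $\nabla$, the full tensor $(\nabla_X\phi)Y$ is reconstructed by polarizing $g((\nabla_X\phi)Y,Z)$ over the distribution $D=\ker\eta$ and over the $\xi$-direction separately, returning (ii). This step is purely computational and carries no obstruction.

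The essential point is the equivalence of normality (condition (i)) with the $\nabla\phi$-formula (condition (ii)). For (ii)$\Rightarrow$(i) I would substitute the explicit form of $(\nabla_X\phi)Y$ into $[\phi,\phi](X,Y)=\phi^{2}[X,Y]+[\phi X,\phi Y]-\phi[\phi X,Y]-\phi[X,\phi Y]$, rewrite all brackets through $\nabla$, and simplify using $\phi^{2}=\mathrm{Id}-\eta\otimes\xi$ and the skew-symmetry of $\Phi$; the terms proportional to $\alpha$ and to $\beta$ then cancel against $-2d\eta(X,Y)\xi$, yielding $N^{(1)}=0$. The converse (i)$\Rightarrow$(ii) is where the real work lies: assuming $N^{(1)}=0$, I would feed this into the general $(\nabla_X\phi)Y$-identity and exploit dimension three. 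Evaluating on the $\phi$-basis, normality forces the horizontal components of $\nabla\phi$ to be governed by exactly two scalars—one coupling to $g(X,Y)\xi-\eta(Y)X$ and one to $g(\phi X,Y)\xi-\eta(Y)\phi X$—because the eigendistributions $D^{\pm}$ are $1$-dimensional and the para-complex structure on $D$ leaves no further free components. Defining $\alpha,\beta$ to be these two scalars produces (ii).

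I expect the main obstacle to be precisely this reduction to two functions in the direction (i)$\Rightarrow$(ii): one must verify that the off-diagonal and $\xi$-directional pieces of $\nabla\phi$, a priori unconstrained, are in fact annihilated by $N^{(1)}=0$, and that the surviving pieces assemble into the two invariantly defined functions $\alpha,\beta$ rather than a larger family. Keeping careful track of the paracontact signs coming from $g(\phi X,\phi Y)=-g(X,Y)+\eta(X)\eta(Y)$, which flip several terms relative to the almost contact case treated by Olszak \cite{olszak}, is the delicate bookkeeping that makes this link nontrivial.
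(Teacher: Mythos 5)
The paper offers no proof of this proposition: it is imported verbatim from We\l yczko \cite{Welyczko1}, so there is no in-paper argument to compare yours against. Judged on its own terms, your sketch follows the standard Olszak--We\l yczko route (reduce everything to the form of $\nabla\xi$, then exploit the rigidity of a $3$-dimensional tangent space with a $1$-dimensional $D^{+}$ and $D^{-}$), and the overall architecture --- $(b)\Leftrightarrow(c)$ by linear algebra, $(a)\Leftrightarrow(b)$ via the Koszul-type expression of $\nabla\phi$ through $d\Phi$ and $N^{(1)}$ --- is sound. Your $(b)\Rightarrow(c)$ computation (set $Y=\xi$, use $(\nabla_X\phi)\xi=-\phi\nabla_X\xi$, apply $\phi$ and $\eta(\nabla_X\xi)=0$) is correct and recovers the same $\alpha,\beta$.

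The one place where the write-up understates the actual content is the claim that $(c)\Rightarrow(b)$ is ``purely computational'' from metric compatibility by polarization. Knowing $\nabla_X\xi$ determines only the components $g((\nabla_X\phi)Y,\xi)$ and $(\nabla_X\phi)\xi$; the component $g((\nabla_X\phi)Y,Z)$ for $Y,Z\in D$ is a priori free, and metric compatibility alone does not kill it. What does kill it, in dimension $3$ only, is the combination of the skew-symmetry of $(\nabla_X\Phi)(Y,Z)=g(Y,(\nabla_X\phi)Z)$ with the covariant derivative of $\phi^{2}=\mathrm{Id}-\eta\otimes\xi$ restricted to the $2$-dimensional distribution $D$: an endomorphism of $D$ that is both $g$-skew and anticommutes with the paracomplex structure $\phi|_{D}$ is zero. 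This yields the identity $(\nabla_X\phi)Y=g(\phi\nabla_X\xi,Y)\xi-\eta(Y)\phi\nabla_X\xi$ on \emph{every} $3$-dimensional almost paracontact metric manifold, and it is this lemma --- not polarization --- that both gives $(c)\Rightarrow(b)$ and reduces your hard direction $(a)\Rightarrow(b)$ to showing that $N^{(1)}=0$ forces $\nabla\xi|_{D}$ to commute with $\phi|_{D}$, hence to equal $\alpha\,\mathrm{Id}+\beta\phi$ on $D$. You should state and prove that identity explicitly; with it in hand the rest of your outline goes through, and without it the step you call unobstructed is exactly where the argument would stall.
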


$(a)$ $M$ \textit{is normal,}

$(b)$\textit{\ there exist functions }$\alpha ,$\textit{\ }$\beta $\textit{\
on }$M$\textit{\ such that}

\begin{equation}
(\nabla _{X}\phi )Y=\beta (g(X,Y)\xi -\eta (Y)X)+\alpha (g(\phi X,Y)\xi
-\eta (Y)\phi X),  \label{N3}
\end{equation}

$(c)$\textit{\ there exist functions }$\alpha ,$\textit{\ }$\beta $\textit{\
on }$M$\textit{\ such that}%
\begin{equation}
\nabla _{X}\xi =\alpha (X-\eta (X)\xi )+\beta \phi X.  \label{N4}
\end{equation}

\begin{corollary}
\cite{irem1}\label{irem} For a normal almost paracontact metric structure $%
(\phi ,\xi ,\eta ,g)$ on $M$, we have $\nabla _{\xi }\xi =0$ and $d\eta
=-\beta \Phi $. \textit{The functions }$\alpha ,\beta $\textit{\ realizing (%
\ref{N3}) as well as (\ref{N4}) are given by }\cite{Welyczko1}%
\begin{equation}
2\alpha =\text{ Trace }\left\{ X\longrightarrow \nabla _{X}\xi \right\} ,%
\text{ \ }2\beta =\text{Trace }\left\{ X\longrightarrow \phi \nabla _{X}\xi
\right\} .  \label{N5}
\end{equation}
\end{corollary}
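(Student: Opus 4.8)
The plan is to derive all four assertions directly from characterization $(c)$ of Proposition \ref{J3}, i.e.\ from the formula (\ref{N4}) for $\nabla_X\xi$. A preliminary fact I will use repeatedly is that $\phi$ is skew-adjoint with respect to $g$, namely $g(\phi X,Y)=-g(X,\phi Y)$; this follows by replacing $Y$ with $\phi Y$ in (\ref{1}) and invoking $\phi^2=Id-\eta\otimes\xi$ together with $\phi\xi=0$ and $\eta\circ\phi=0$ from (\ref{2}). In particular $g(\phi X,X)=0$ for every $X$, and the fundamental form $\Phi(X,Y)=g(X,\phi Y)$ of (\ref{3}) is genuinely skew-symmetric. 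The first identity is immediate: setting $X=\xi$ in (\ref{N4}) and using $\eta(\xi)=1$ makes the term $\alpha(\xi-\eta(\xi)\xi)$ vanish, while $\phi\xi=0$ kills the remaining term, so $\nabla_\xi\xi=0$.

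For $d\eta=-\beta\Phi$ I would first rewrite the covariant derivative of $\eta$. From $\eta(X)=g(X,\xi)$ and metric compatibility of $\nabla$ one gets $(\nabla_X\eta)(Y)=g(Y,\nabla_X\xi)$; substituting (\ref{N4}) and the skew-adjointness of $\phi$ yields
\[
(\nabla_X\eta)(Y)=\alpha\bigl(g(X,Y)-\eta(X)\eta(Y)\bigr)-\beta\,\Phi(X,Y).
\]
Since $\nabla$ is torsion-free, $d\eta(X,Y)=\tfrac12\bigl[(\nabla_X\eta)(Y)-(\nabla_Y\eta)(X)\bigr]$. The symmetric $\alpha$-part cancels under antisymmetrization, whereas the skew part doubles because $\Phi(Y,X)=-\Phi(X,Y)$, leaving exactly $d\eta=-\beta\Phi$.

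Finally, for the two trace formulas I would evaluate the traces in a $\phi$-basis $\{X_1,\dots,X_n,Y_1,\dots,Y_n,\xi\}$, or more generally in any pseudo-orthonormal frame $\{e_i\}$ with $\epsilon_i=g(e_i,e_i)$, so that $\text{Trace}\,A=\sum_i\epsilon_i\,g(Ae_i,e_i)$. From (\ref{N4}) one finds $g(\nabla_{e_i}\xi,e_i)=\alpha(\epsilon_i-\eta(e_i)^2)$, the $\beta$-term dropping out because $g(\phi e_i,e_i)=0$; summing with the weights $\epsilon_i$ and using the two elementary identities $\sum_i 1=\dim M=2n+1$ and $\sum_i\epsilon_i\eta(e_i)^2=g(\xi,\xi)=1$ gives $\text{Trace}\left\{X\longrightarrow\nabla_X\xi\right\}=2n\alpha$. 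For the second trace I apply $\phi$ to (\ref{N4}), obtaining $\phi\nabla_{e_i}\xi=\alpha\,\phi e_i+\beta(e_i-\eta(e_i)\xi)$ via $\phi^2=Id-\eta\otimes\xi$ and $\phi\xi=0$; now the $\alpha$-term vanishes and the same summation produces $\text{Trace}\left\{X\longrightarrow\phi\nabla_X\xi\right\}=2n\beta$.

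The stated formulas are the case $n=1$ of these, which is precisely the $3$-dimensional setting of Proposition \ref{J3}. I do not expect a genuine obstacle in this argument, as it is a direct verification from (\ref{N4}); the only points requiring care are the pseudo-Riemannian sign bookkeeping carried by the weights $\epsilon_i$ and the normalization $\sum_i\epsilon_i\eta(e_i)^2=1$, which is exactly what forces each trace to register only the $2n$ directions transverse to $\xi$ and hence to reduce to $2\alpha$ and $2\beta$ in dimension three.
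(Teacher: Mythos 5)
Your proposal is correct. The paper itself offers no proof of this corollary (it is quoted from the cited references), so there is nothing to compare against internally; your argument is the standard direct verification from (\ref{N4}), and every step checks out: the skew-adjointness $g(\phi X,Y)=-g(X,\phi Y)$ follows from (\ref{1}) and (\ref{2}) as you say, the antisymmetrization correctly kills the symmetric $\alpha$-term and doubles the $\Phi$-term (consistent with the $\tfrac12$-convention for $d\eta$ implicit in the paper, since $\beta=-1$ then recovers $d\eta=\Phi$ for para-Sasakian structures), and the pseudo-orthonormal trace bookkeeping via $\sum_i\epsilon_i^2=2n+1$ and $\sum_i\epsilon_i\eta(e_i)^2=g(\xi,\xi)=1$ correctly yields $2n\alpha$ and $2n\beta$, reducing to $2\alpha$ and $2\beta$ in the $3$-dimensional setting where (\ref{N4}) actually holds.
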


\begin{proposition}
\cite{Welyczko1}\label{J5} For a $3$-\textit{dimensional almost paracontact
metric\ manifold }$M$\textit{, the following three conditions are mutually
equivalent}
\end{proposition}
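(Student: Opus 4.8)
The plan is to route all three conditions through Proposition \ref{J3}, which already identifies normality with the covariant-derivative identity (\ref{N3}) and with the formula (\ref{N4}) for $\nabla \xi$. The genuinely new ingredient in Proposition \ref{J5} is the closedness of the fundamental form $\Phi$ (recall that a quasi-para-Sasakian manifold is a normal almost paracontact metric manifold with $d\Phi=0$), so the computational core will be to express $d\Phi$ through the structure functions $\alpha$ and $\beta$.

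First I would assume normality and differentiate (\ref{3}). Metric compatibility gives $(\nabla_X \Phi)(Y,Z)=g(Y,(\nabla_X \phi)Z)$, and substituting (\ref{N3}) together with $\eta(X)=g(X,\xi)$ and the skew relation $g(\phi X,Y)=-g(X,\phi Y)$ yields a closed form for $\nabla \Phi$. Taking the cyclic sum
\[
d\Phi(X,Y,Z)=(\nabla_X \Phi)(Y,Z)+(\nabla_Y \Phi)(Z,X)+(\nabla_Z \Phi)(X,Y),
\]
the terms carrying $\beta$ cancel telescopically, leaving
\[
d\Phi(X,Y,Z)=2\alpha\big(\eta(X)\Phi(Y,Z)+\eta(Y)\Phi(Z,X)+\eta(Z)\Phi(X,Y)\big).
\]
Evaluating on $X=\xi$ and $Y,Z\in\ker\eta$ with $\Phi(Y,Z)\neq 0$ shows that $d\Phi=0$ forces $\alpha=0$, while conversely $\alpha=0$ makes the right-hand side vanish identically. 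This identity is the technical heart of the argument.

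The equivalences then fall out. If $M$ is quasi-para-Sasakian it is normal with $d\Phi=0$, so the displayed formula gives $\alpha=0$ and (\ref{N3}) collapses to $(\nabla_X \phi)Y=\beta(g(X,Y)\xi-\eta(Y)X)$; the matching (\ref{N4}) from Proposition \ref{J3} then reads $\nabla_X \xi=\beta\,\phi X$. Conversely, $\nabla_X \xi=\beta\,\phi X$ is precisely (\ref{N4}) with $\alpha=0$, so Proposition \ref{J3} returns normality together with (\ref{N3}) at $\alpha=0$, whence $d\Phi=0$ and $M$ is quasi-para-Sasakian. The covariant-derivative form (\ref{N3}) with $\alpha=0$ closes the cycle in the same manner, and Corollary \ref{irem} confirms the compatibility $d\eta=-\beta\Phi$ throughout.

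The only real obstacle is the bookkeeping in the $d\Phi$ computation: keeping the sign conventions straight (that $\Phi$ is skew and $g(\phi X,Y)=-g(X,\phi Y)$) and checking that the $\beta$-contributions genuinely telescope to zero in the cyclic sum. Once that single identity is in place, everything else is an immediate reading of Proposition \ref{J3} and Corollary \ref{irem}.
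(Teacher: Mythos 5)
Your argument is correct. Note first that the paper itself offers no proof of this proposition: it is quoted verbatim from \cite{Welyczko1}, so there is no internal proof to compare against. Your route — reducing everything to Proposition \ref{J3} and isolating the single new computation $d\Phi(X,Y,Z)=2\alpha\bigl(\eta(X)\Phi(Y,Z)+\eta(Y)\Phi(Z,X)+\eta(Z)\Phi(X,Y)\bigr)$ — is exactly the standard argument, and the computation checks out: the $\beta$-terms in the cyclic sum of $(\nabla_X\Phi)(Y,Z)=\beta\bigl(g(X,Z)\eta(Y)-\eta(Z)g(X,Y)\bigr)+\alpha\bigl(g(\phi X,Z)\eta(Y)-\eta(Z)g(Y,\phi X)\bigr)$ do cancel, and evaluating at $X=\xi$, $Y=e_1$, $Z=\phi e_1$ gives $2\alpha g(e_1,e_1)\neq 0$ unless $\alpha=0$. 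The one point worth making explicit when you pass from (b) or (c) back to (a) is that the function $\alpha$ appearing in (\ref{N3})--(\ref{N4}) is uniquely determined by the structure via the trace formulas (\ref{N5}) of Corollary \ref{irem}; this is what guarantees that an identity of the form (\ref{N6}) or (\ref{N7}) really forces $\alpha=0$ in the $d\Phi$ identity, rather than merely exhibiting one admissible choice. With that observation included, the cycle of equivalences is complete.
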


$(a)$ $M$ \textit{is quasi-para-Sasakian,}

$(b)$\textit{there exists a function }$\beta $\textit{\ on }$M$\textit{\
such that}%
\begin{equation}
(\nabla _{X}\phi )Y=\beta (g(X,Y)\xi -\eta (Y)X),  \label{N6}
\end{equation}

$(c)$\textit{there exists a function }$\beta $\textit{\ on }$M$\textit{\
such that}%
\begin{equation}
\nabla _{X}\xi =\beta \phi X.  \label{N7}
\end{equation}%
A $3$-dimensional normal almost paracontact metric manifold is

$\bullet $ paracosymplectic if $\alpha =\beta =0$ \cite{DACKO},

$\bullet $ quasi-para-Sasakian if and only if $\alpha =0$ and $\beta \neq 0$ 
\cite{erdem}, \cite{Welyczko1},

$\bullet $ $\beta $-para-Sasakian if and only if $\alpha =0$ and $\beta \neq
0$ and $\beta $ is constant, in particular, para-Sasakian if $\beta =-1$ 
\cite{Welyczko1}, \cite{Za},

$\bullet $ $\alpha $-para-Kenmotsu if $\alpha \neq 0$ and $\alpha $ is
constant and $\beta =0.$

Obviously, the class of para-Sasakian manifolds is contained in the class of
quasi-para-Sasakian manifolds. The converse does not hold in general. Also
in this context the para-Sasakian condition implies the $K$-paracontact
condition and the converse holds only in dimension $3$. A paracontact metric
manifold will be called\textit{\ paracosymplectic} if $d\Phi =0,$ $d\eta =0$ 
\cite{DACKO}, obviously, the class of paracosymplectic manifolds is
contained in the class of quasi-para-Sasakian manifolds.

\begin{theorem}
\cite{irem1}\label{pi}Let $(M,\phi ,\xi ,\eta ,g)$ be a $3$-dimensional
normal almost paracontact metric manifold. Then the following curvature
identities hold%
\begin{eqnarray}
&&R(X,Y)Z  \notag \\
&=&(2(\xi (\alpha )+\alpha ^{2}+\beta ^{2})+\frac{1}{2}\tau
)(g(Y,Z)X-g(X,Z)Y)  \notag \\
&&-(\xi (\alpha )+3(\alpha ^{2}+\beta ^{2})+\frac{1}{2}\tau )((g(Y,Z)\eta
(X)\xi -g(X,Z)\eta (Y)\xi  \notag \\
&&+\eta (Y)\eta (Z)X-\eta (X)\eta (Z)Y)+(\phi Z(\beta )-Z(\alpha ))(\eta
(Y)X-\eta (X)Y)  \notag \\
&&+(\phi Y(\beta )-Y(\alpha ))(\eta (Z)X-g(X,Z)\xi )  \label{R5} \\
&&-\left( \phi X(\beta )-X(\alpha )\right) (\eta (Z)Y-g(Y,Z)\xi )  \notag \\
&&+(\phi \text{grad}\beta +\text{grad}\alpha )(\eta (Y)g(X,Z)-\eta
(X)g(Y,Z)).  \notag
\end{eqnarray}%
\begin{eqnarray}
S(Y,Z) &=&-(\xi (\alpha )+\alpha ^{2}+\beta ^{2}+\frac{1}{2}\tau )g(\phi
Y,\phi Z)  \label{S2} \\
&&+\eta (Z)\left( \phi Y(\beta )-Y(\alpha )\right)  \notag \\
&&+\eta (Y)\left( \phi Z(\beta )-Z(\alpha )\right) -2(\alpha ^{2}+\beta
^{2})\eta (Y)\eta (Z),  \notag
\end{eqnarray}%
where $R$, $S$ and $\tau $ are resp. Riemannian curvature, Ricci tensor and
scalar curvature of $M$.
\end{theorem}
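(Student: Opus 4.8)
The plan is to exploit the fact that in dimension $3$ the Weyl conformal tensor vanishes identically, so that the Riemann curvature is algebraically determined by the Ricci tensor $S$ and the scalar curvature $\tau$ through the standard identity
\[
R(X,Y)Z = S(Y,Z)X - S(X,Z)Y + g(Y,Z)QX - g(X,Z)QY - \tfrac{\tau}{2}\bigl(g(Y,Z)X - g(X,Z)Y\bigr),
\]
where $Q$ is the Ricci operator defined by $g(QX,Y)=S(X,Y)$. Consequently, once $Q$ is known explicitly, both \eqref{R5} and \eqref{S2} follow by substitution and routine regrouping. The whole problem therefore reduces to determining $Q$, and for this I would first compute the mixed curvature $R(X,Y)\xi$ directly from the normality structure equation \eqref{N4}.

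First I would differentiate $\nabla_{Y}\xi=\alpha\bigl(Y-\eta(Y)\xi\bigr)+\beta\phi Y$ a second time and antisymmetrize, i.e.\ form $R(X,Y)\xi=\nabla_{X}\nabla_{Y}\xi-\nabla_{Y}\nabla_{X}\xi-\nabla_{[X,Y]}\xi$. Carrying this out requires the companion formula \eqref{N3} for $\nabla\phi$, the metric-compatibility identity $(\nabla_{X}\eta)(Y)=g(\nabla_{X}\xi,Y)$, and the algebraic relations $\phi\xi=0$, $\eta\circ\phi=0$, $\phi^{2}=\mathrm{Id}-\eta\otimes\xi$ together with the skew-adjointness $g(\phi X,Y)=-g(X,\phi Y)$. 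Since $\alpha$ and $\beta$ are \emph{not} assumed constant, their derivatives survive, producing the terms $X(\alpha)$, $\phi X(\beta)$, $\mathrm{grad}\,\alpha$ and $\phi\,\mathrm{grad}\,\beta$ visible in \eqref{R5}, while the quadratic combination $\alpha^{2}+\beta^{2}$ and the term $\xi(\alpha)$ arise from the undifferentiated cross terms. This explicit but purely mechanical computation is where I expect the bulk of the work—and the main opportunity for sign errors—to lie, so I would organize it by collecting separately the coefficients of $X$, $Y$, $\xi$, $\phi X$ and $\phi Y$.

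Next I would recover $Q$ by comparing the direct expression for $R(X,Y)\xi$ with the $3$-dimensional identity specialized to $Z=\xi$,
\[
R(X,Y)\xi = \eta(Y)QX - \eta(X)QY + S(Y,\xi)X - S(X,\xi)Y - \tfrac{\tau}{2}\bigl(\eta(Y)X-\eta(X)Y\bigr).
\]
Setting $Y=\xi$ with $X$ horizontal isolates $QX$ in terms of $R(X,\xi)\xi$, while contracting $R(\,\cdot\,,\,\cdot\,)\xi$ over a $\phi$-basis $\{e_{1},\phi e_{1},\xi\}$ (with signs $+1,-1,+1$ dictated by the signature $(2,1)$) yields $S(Y,\xi)$ and $S(\xi,\xi)$ outright. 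Matching coefficients then pins down $Q$, and hence $S$, completely: one checks that the horizontal part assembles into the $g(\phi Y,\phi Z)$ term of \eqref{S2}, whereas the $\xi$-directions reproduce the $\eta$-terms and the coefficient $-2(\alpha^{2}+\beta^{2})$ of $\eta(Y)\eta(Z)$. Throughout I would keep $\tau$ as an independent unknown—this is precisely why it appears explicitly in the statements rather than being eliminated in favour of $\alpha$ and $\beta$.

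Finally, substituting the resulting Ricci operator back into the $3$-dimensional curvature identity and regrouping yields \eqref{R5}. Beyond the sign bookkeeping of the second paragraph, the only genuinely delicate point is the consistency of the traces: the value of $S(\xi,\xi)$ obtained by contraction must agree with the one read off from the $\eta$-components of the direct computation of $R(X,Y)\xi$, and this internal cross-check is what validates the entire derivation.
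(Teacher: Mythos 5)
Your plan is correct and is essentially the standard argument: the paper itself does not reprove Theorem \ref{pi} but imports it from \cite{irem1}, where (following Olszak's scheme for the contact case) one computes $R(X,Y)\xi$ from \eqref{N4} and \eqref{N3}, extracts $Q$ and $S$ by contraction over a $\phi$-basis and by setting $Z=\xi$ in the three-dimensional identity $R(X,Y)Z=S(Y,Z)X-S(X,Z)Y+g(Y,Z)QX-g(X,Z)QY-\tfrac{\tau}{2}(g(Y,Z)X-g(X,Z)Y)$, and keeps $\tau$ as a free parameter since the horizontal component $R_{1212}$ is not determined by the structure equations. Your observations about the signature signs $(+1,-1,+1)$ and about why $\tau$ cannot be eliminated are exactly the right ones; only the routine covariant-derivative bookkeeping remains to be carried out.
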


If we take $\alpha =0$ in Theorem \textit{\ref{pi}, }we get following

\begin{theorem}
\label{new}Let $(M,\phi ,\xi ,\eta ,g)$ be a $3$-dimensional
quasi-Para-Sasakian manifold. Then the following curvature identities hold%
\begin{eqnarray}
&&R(X,Y)Z  \notag \\
&=&(2\beta ^{2}+\frac{1}{2}\tau )(g(Y,Z)X-g(X,Z)Y)  \notag \\
&&-(3\beta ^{2}+\frac{1}{2}\tau )((g(Y,Z)\eta (X)\xi -g(X,Z)\eta (Y)\xi 
\notag \\
&&+\eta (Y)\eta (Z)X-\eta (X)\eta (Z)Y)+\phi Z(\beta )(\eta (Y)X-\eta (X)Y) 
\notag \\
&&+\phi Y(\beta )(\eta (Z)X-g(X,Z)\xi )  \notag \\
&&-\phi X(\beta )(\eta (Z)Y-g(Y,Z)\xi )  \notag \\
&&+(\phi \text{grad}\beta )(\eta (Y)g(X,Z)-\eta (X)g(Y,Z)).  \label{m1}
\end{eqnarray}%
\begin{eqnarray}
S(Y,Z) &=&(\beta ^{2}+\frac{1}{2}\tau )g(Y,Z)-(3\beta ^{2}+\frac{1}{2}\tau
)\eta (Y)\eta (Z)  \notag \\
&&+\eta (Y)\phi Z(\beta )+\eta (Z)\phi Y(\beta ).  \label{m2}
\end{eqnarray}%
where $R$, $S$ and $\tau $ are resp. Riemannian curvature, Ricci tensor and
scalar curvature of $M$.
\end{theorem}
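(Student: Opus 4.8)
The plan is to obtain both identities by pure specialization of Theorem \ref{pi}, since a quasi-para-Sasakian manifold is exactly a normal almost paracontact metric manifold with $\alpha =0$ (and $\beta \neq 0$), as recorded in the classification following Proposition \ref{J5}. Thus every $3$-dimensional quasi-para-Sasakian manifold is in particular normal, and the general formulas (\ref{R5}) and (\ref{S2}) apply verbatim with the function $\alpha$ identically zero. The whole argument is then algebraic bookkeeping: because $\alpha \equiv 0$ as a function, all of its derivatives vanish as well, so $\xi(\alpha)=0$, $X(\alpha)=Y(\alpha)=Z(\alpha)=0$, and $\mathrm{grad}\,\alpha =0$.

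First I would substitute $\alpha \equiv 0$ into the curvature formula (\ref{R5}). The leading coefficient $2(\xi(\alpha)+\alpha^{2}+\beta^{2})+\tfrac{1}{2}\tau$ collapses to $2\beta^{2}+\tfrac{1}{2}\tau$, the coefficient $\xi(\alpha)+3(\alpha^{2}+\beta^{2})+\tfrac{1}{2}\tau$ collapses to $3\beta^{2}+\tfrac{1}{2}\tau$, each factor of the form $\phi Z(\beta)-Z(\alpha)$ reduces to $\phi Z(\beta)$, and the vector $\phi\,\mathrm{grad}\,\beta+\mathrm{grad}\,\alpha$ reduces to $\phi\,\mathrm{grad}\,\beta$. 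Reading off the resulting expression term by term gives (\ref{m1}) directly, with no further manipulation.

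Next I would substitute $\alpha \equiv 0$ into the Ricci formula (\ref{S2}), which yields the intermediate expression
\begin{equation*}
S(Y,Z)=-(\beta^{2}+\tfrac{1}{2}\tau)g(\phi Y,\phi Z)+\eta(Z)\phi Y(\beta)+\eta(Y)\phi Z(\beta)-2\beta^{2}\eta(Y)\eta(Z).
\end{equation*}
The single genuine step here is to rewrite $g(\phi Y,\phi Z)$ by means of the compatibility relation (\ref{1}), namely $g(\phi Y,\phi Z)=-g(Y,Z)+\eta(Y)\eta(Z)$. Expanding this and collecting the two contributions to the $\eta(Y)\eta(Z)$ term, whose combined coefficient is $-(\beta^{2}+\tfrac{1}{2}\tau)-2\beta^{2}=-(3\beta^{2}+\tfrac{1}{2}\tau)$, produces (\ref{m2}).

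There is no serious obstacle in this proof: the entire mathematical content resides in Theorem \ref{pi}, and what remains is a direct substitution together with one application of metric compatibility. The only point demanding care is the sign bookkeeping when merging the two $\eta(Y)\eta(Z)$ terms in the Ricci identity; otherwise the reduction from the normal case to the quasi-para-Sasakian case is immediate once $\alpha \equiv 0$ is imposed.
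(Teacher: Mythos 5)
Your proposal is correct and is exactly the paper's approach: the author states Theorem \ref{new} as the specialization of Theorem \ref{pi} to $\alpha\equiv 0$, and your substitution into (\ref{R5}) and (\ref{S2}), together with the rewriting $g(\phi Y,\phi Z)=-g(Y,Z)+\eta(Y)\eta(Z)$ from (\ref{1}) to merge the $\eta(Y)\eta(Z)$ terms, is precisely the intended (and correctly executed) computation.
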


\begin{remark}
In the proof of Theorem \textit{\ref{pi}, the author showed that }$\xi
(\beta )+2\alpha \beta =0$. Namely, for $3$-dimensional quasi-Para-Sasakian
manifolds,%
\begin{equation}
\xi (\beta )=0.  \label{ZETA}
\end{equation}
\end{remark}

\begin{theorem}
\cite{irem3}\label{mert}Let $(M^{2n+1},\phi ,\xi ,\eta ,g)$ be a \textit{%
quasi-para-Sasakian manifold} of constant curvature $K$. Then $K\leq 0.$
Furthermore,
\end{theorem}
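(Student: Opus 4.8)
The plan is to extract the constant curvature information by evaluating the curvature tensor on the Reeb field $\xi$. The only structural input needed is the pair of equations $\nabla_X\xi = \beta\phi X$ and $(\nabla_X\phi)Y = \beta(g(X,Y)\xi - \eta(Y)X)$ (equation \eqref{N6}), which characterise quasi-para-Sasakian manifolds; I take their $(2n+1)$-dimensional versions from \cite{irem3}. First I would compute $R(X,Y)\xi = \nabla_X\nabla_Y\xi - \nabla_Y\nabla_X\xi - \nabla_{[X,Y]}\xi$ directly. Differentiating $\nabla_Y\xi = \beta\phi Y$ once more and substituting \eqref{N6}, the terms $\beta\phi\nabla_X Y$ and $\beta\phi\nabla_Y X$ cancel against $\nabla_{[X,Y]}\xi = \beta\phi(\nabla_X Y - \nabla_Y X)$, and the symmetric $\beta^2 g(X,Y)\xi$ contributions drop out, leaving
\begin{equation}
R(X,Y)\xi = X(\beta)\,\phi Y - Y(\beta)\,\phi X + \beta^2\big(\eta(X)Y - \eta(Y)X\big). \label{plan1}
\end{equation}

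Next I would impose the constant curvature hypothesis, for which $R(X,Y)\xi = K\big(\eta(Y)X - \eta(X)Y\big)$. Equating this with \eqref{plan1} and regrouping yields
\begin{equation}
X(\beta)\,\phi Y - Y(\beta)\,\phi X = (K+\beta^2)\big(\eta(Y)X - \eta(X)Y\big). \label{plan2}
\end{equation}
The decisive step is to specialise \eqref{plan2} to $X=\xi$ and $Y\in D=\ker\eta$. Since $\phi\xi=0$, $\eta(\xi)=1$ and $\eta(Y)=0$, the relation collapses to $\xi(\beta)\,\phi Y = -(K+\beta^2)Y$ for every $Y\in D$; applying $\phi$ and using $\phi^2 Y = Y$ on $D$ produces the companion identity $\xi(\beta)\,Y = -(K+\beta^2)\phi Y$.

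From here the conclusion is immediate. If $\xi(\beta)\neq 0$, these two relations would make $\phi Y$ a fixed scalar multiple of $Y$ for all $Y\in D$; but $\phi$ restricted to $D$ carries the two eigenvalues $\pm 1$ on the equidimensional eigenspaces $D^{\pm}$ (each of dimension $n\geq 1$), so it cannot act as a single scalar. Hence $\xi(\beta)=0$, and substituting back gives $(K+\beta^2)Y=0$ for all nonzero $Y\in D$, that is $K=-\beta^2\leq 0$, which is the claim. I expect the main obstacle to be justifying the higher-dimensional structure equations underlying \eqref{plan1}, since the three-dimensional Proposition \ref{J5} does not immediately guarantee $\nabla_X\xi=\beta\phi X$ in dimension $2n+1$; once that is in hand the curvature computation and the eigenvalue dichotomy are routine. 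For the "furthermore" assertion I would additionally put $X,Y\in D$ in \eqref{plan2} to get $X(\beta)\phi Y = Y(\beta)\phi X$, whence injectivity of $\phi$ on $D$ with $\dim D\geq 2$ forces $X(\beta)=0$ on $D$; together with $\xi(\beta)=0$ this makes $\beta$ locally constant, so $M$ is $\beta$-para-Sasakian (or paracosymplectic when $\beta=0$) with $K=-\beta^2$.
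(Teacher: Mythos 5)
This theorem is imported from the author's reference \cite{irem3} and the present paper contains no proof of it, so there is nothing internal to compare your argument against; I can only assess the argument on its own terms. Granting the structure equations $\nabla_X\xi=\beta\phi X$ and $(\nabla_X\phi)Y=\beta(g(X,Y)\xi-\eta(Y)X)$, your computation is correct and efficient: the formula for $R(X,Y)\xi$ is right, and the specialisation $X=\xi$, $Y\in D$ together with the fact that $\phi|_D$ has both eigenvalues $+1$ and $-1$ (on the equidimensional distributions $D^{\pm}$) cleanly forces $\xi(\beta)=0$ and $K=-\beta^2\le 0$. In dimension $3$ this is a complete proof.

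The genuine gap is exactly the one you flag and then set aside: the two structure equations you start from are only known to characterise quasi-para-Sasakian manifolds in dimension $3$ (Proposition~\ref{J5}, due to We{\l}yczko). In dimension $2n+1>3$ a quasi-para-Sasakian manifold is by definition a normal almost paracontact metric manifold with $d\Phi=0$, and the correct analogue of the structure equation involves $\nabla_X\xi=\phi AX$ for a symmetric operator $A$ commuting with $\phi$ which need \emph{not} be a multiple of the identity; the standard example is the product of a $3$-dimensional para-Sasakian manifold with a para-K\"ahler manifold, which is quasi-para-Sasakian but has $\nabla_Y\xi=0$ for $Y$ tangent to the para-K\"ahler factor. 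So the single-function ansatz $\nabla_X\xi=\beta\phi X$ is false in general, and as written your argument proves the theorem only for $n=1$, whereas the statement is for arbitrary $n$. To close the gap you would either have to show that constant curvature forces the structure operator $A$ to be proportional to the identity (which is plausible and is presumably what \cite{irem3} does, in parallel with Olszak's treatment of conformally flat and constant-curvature quasi-Sasakian manifolds), or redo the curvature computation with the general operator $A$ from the outset. The ``furthermore'' part inherits the same dependence: once $K=-\beta^2$ with $\beta$ constant is established in full generality, the dichotomy paracosymplectic versus homothetic deformation of a para-Sasakian structure follows as you describe.
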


\textit{\ \ \ \ \ \ \ }$\bullet $\textit{If }$K=0,$ \textit{the manifold is
paracosymplectic,}

\textit{\ \ \ \ \ \ \ }$\bullet $\textit{If }$K<0,$\textit{\ the structure }$%
(\phi ,\xi ,\eta ,g)$\textit{\ is obtained by a homothetic deformation of a
para-Sasakian structure on }$M^{2n+1}.$

\section{$3$-dimensional conformally flat quasi-Para-Sasakian manifolds}

For the conformal flatness, we will use linear $(1,1)$-tensor field $L$
defined by%
\begin{equation}
L=Q-(\frac{r}{4}),  \label{C0}
\end{equation}
where $S(X,Y)=g(QX,Y).$

From now on, we will use the notion $df(X)$ instead of $g(\func{grad}f,X)$.

\begin{lemma}
\label{L1}The linear operator $L$ of a $3$-dimensional quasi-Para-Sasakian
manifold is given by%
\begin{equation}
LY=\left( \frac{\tau }{4}+\beta ^{2}\right) Y-\left( 3\beta ^{2}+\frac{\tau 
}{2}\right) \eta (Y)\xi -\eta (Y)\phi \func{grad}\beta +d\beta (\phi Y)\xi .
\label{C1}
\end{equation}
\end{lemma}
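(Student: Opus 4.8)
The plan is to read off the Ricci operator $Q$ directly from the Ricci tensor identity (\ref{m2}) of Theorem \ref{new} and then subtract $\tfrac{\tau}{4}\,\mathrm{Id}$ as prescribed by the definition (\ref{C0}) (with $r=\tau$ the scalar curvature). Since $Q$ is determined by $S(Y,Z)=g(QY,Z)$, the entire argument reduces to rewriting every term on the right-hand side of (\ref{m2}) in the form $g(\,\cdot\,,Z)$ and collecting the coefficients of $Z$. No curvature computation is needed beyond what (\ref{m2}) already supplies, so this is essentially an index-raising exercise.

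First I would dispose of the two metric-type terms. The term $(\beta^{2}+\tfrac{\tau}{2})g(Y,Z)$ contributes $(\beta^{2}+\tfrac{\tau}{2})Y$ to $QY$, while $-(3\beta^{2}+\tfrac{\tau}{2})\eta(Y)\eta(Z)=g\bigl(-(3\beta^{2}+\tfrac{\tau}{2})\eta(Y)\xi,\,Z\bigr)$ contributes $-(3\beta^{2}+\tfrac{\tau}{2})\eta(Y)\xi$, using $\eta(Z)=g(\xi,Z)$ from (\ref{2}). The only structural step is the treatment of the two derivative terms. For $\eta(Z)\,\phi Y(\beta)$ I would observe that $\phi Y(\beta)=d\beta(\phi Y)$ is merely a scalar factor, so $\eta(Z)\,\phi Y(\beta)=g\bigl(d\beta(\phi Y)\,\xi,\,Z\bigr)$, giving the summand $d\beta(\phi Y)\,\xi$. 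For $\eta(Y)\,\phi Z(\beta)$ I would write $\phi Z(\beta)=d\beta(\phi Z)=g(\operatorname{grad}\beta,\phi Z)$ and invoke the skew-symmetry $g(\phi X,Y)=-g(X,\phi Y)$, which follows from the skew-symmetry of the fundamental form $\Phi$ in (\ref{3}); this converts the term into $-\eta(Y)\,g(\phi\operatorname{grad}\beta,Z)$, i.e.\ the summand $-\eta(Y)\,\phi\operatorname{grad}\beta$ in $QY$.

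Collecting these contributions yields
\begin{equation*}
QY=\Bigl(\beta^{2}+\tfrac{\tau}{2}\Bigr)Y-\Bigl(3\beta^{2}+\tfrac{\tau}{2}\Bigr)\eta(Y)\xi-\eta(Y)\,\phi\operatorname{grad}\beta+d\beta(\phi Y)\,\xi,
\end{equation*}
and substituting into $LY=QY-\tfrac{\tau}{4}Y$ turns the leading coefficient $\beta^{2}+\tfrac{\tau}{2}$ into $\beta^{2}+\tfrac{\tau}{4}$, reproducing exactly (\ref{C1}). I do not expect a genuine obstacle here: once (\ref{m2}) is granted the derivation is purely algebraic, and the single point demanding care is the correct sign when passing $\phi$ across the metric in the $\eta(Y)\,\phi Z(\beta)$ term. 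The only thing worth verifying is that the two derivative terms of (\ref{m2}) raise to \emph{different} vectors — one to a multiple of $\xi$ and the other to a multiple of $\phi\operatorname{grad}\beta$ — so that they neither cancel nor coincide.
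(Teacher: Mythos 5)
Your proposal is correct and follows exactly the paper's route: the paper likewise reads off $QY=\left(\frac{\tau}{2}+\beta^{2}\right)Y-\left(3\beta^{2}+\frac{\tau}{2}\right)\eta(Y)\xi-\eta(Y)\phi\operatorname{grad}\beta+d\beta(\phi Y)\xi$ from (\ref{m2}) and then combines it with (\ref{C0}). The only difference is that you spell out the index-raising (in particular the sign coming from the skew-symmetry of $\Phi$), which the paper leaves implicit.
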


\begin{proof}
By (\ref{m2})\textit{,} we obtain%
\begin{equation}
QY=\left( \frac{\tau }{2}+\beta ^{2}\right) Y-\left( 3\beta ^{2}+\frac{\tau 
}{2}\right) \eta (Y)\xi -\eta (Y)\phi \func{grad}\beta +d\beta (\phi Y)\xi .
\label{c1.1}
\end{equation}%
The requested equation comes from combining (\ref{C0}) and the above last
equation.
\end{proof}

From (\ref{C1})\textit{, }we have%
\begin{eqnarray*}
(\nabla _{X}L)Y &=&\nabla _{X}LY-L\nabla _{X}Y \\
&=&\left( \frac{d\tau (X)}{4}+2\beta d\beta (X)\right) Y-\left( 6\beta
d\beta (X)+\frac{d\tau (X)}{2}\right) \eta (Y)\xi \\
&&-\left( 3\beta ^{2}+\frac{\tau }{2}\right) \left( (\nabla _{X}\eta )(Y)\xi
+\eta (Y)\nabla _{X}\xi \right) \\
&&-(\nabla _{X}\eta )(Y)\phi \func{grad}\beta -\eta (Y)(\nabla _{X}\phi )%
\func{grad}\beta -\eta (Y)\phi \nabla _{X}\func{grad}\beta \\
&&+(\nabla _{X}d\beta )(\phi Y)\xi +d\beta ((\nabla _{X}\phi )Y)\xi +d\beta
(\phi Y)\nabla _{X}\xi .
\end{eqnarray*}%
If we use (\ref{N6}), (\ref{N7}) and\textit{\ }(\ref{ZETA}) in the last
equation, we can state following:

\begin{lemma}
\label{L2}The covariant derivative of the linear operator $L$ of a $3$%
-dimensional quasi-Para-Sasakian manifold is given by%
\begin{eqnarray}
(\nabla _{X}L)Y &=&\left( \frac{d\tau (X)}{4}+2\beta d\beta (X)\right)
Y-\left( 6\beta d\beta (X)+\frac{d\tau (X)}{2}\right) \eta (Y)\xi  \notag \\
&&-\beta \left( 3\beta ^{2}+\frac{\tau }{2}\right) \left( g(\phi X,Y)\xi
+\eta (Y)\phi X\right) -\beta g(\phi X,Y)\phi \func{grad}\beta  \notag \\
&&-\beta d\beta (X)\eta (Y)\xi -\eta (Y)\phi \nabla _{X}\func{grad}\beta
+(\nabla _{X}d\beta )(\phi Y)\xi  \notag \\
&&-\beta \eta (Y)d\beta (X)\xi +\beta d\beta (\phi Y)\phi X.  \label{C2}
\end{eqnarray}
\end{lemma}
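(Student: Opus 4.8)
The statement I need to prove is a computational identity: given the explicit formula for $L$ from Lemma \ref{L1}, I compute its covariant derivative $(\nabla_X L)Y$. The proof is essentially bookkeeping, but let me sketch the route.

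The starting point is already laid out in the excerpt: the display preceding the lemma expands $(\nabla_X L)Y = \nabla_X(LY) - L(\nabla_X Y)$ by differentiating each term of the formula for $LY$ in Lemma \ref{L1}. So the real content of the proof is simplifying that raw expression using the three structural facts: the quasi-para-Sasakian identities $(\nabla_X\phi)Y = \beta(g(X,Y)\xi - \eta(Y)X)$ from (\ref{N6}) and $\nabla_X\xi = \beta\phi X$ from (\ref{N7}), together with $\xi(\beta)=0$ from (\ref{ZETA}).

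The plan is to process each of the undifferentiated geometric quantities in the raw expansion one at a time. First I would compute $(\nabla_X\eta)(Y)$: since $\eta(Y)=g(Y,\xi)$, we get $(\nabla_X\eta)(Y)=g(Y,\nabla_X\xi)=\beta g(Y,\phi X)=\beta g(\phi X,Y)$ using (\ref{N7}). This disposes of the two terms $(\nabla_X\eta)(Y)\phi\operatorname{grad}\beta$ and the $(\nabla_X\eta)(Y)\xi$ piece inside the $(3\beta^2+\tau/2)$ bracket, converting them to $\beta g(\phi X,Y)(\cdots)$ terms. Next, wherever $\nabla_X\xi$ appears (in the same bracket and in the final $d\beta(\phi Y)\nabla_X\xi$ term), I substitute $\beta\phi X$. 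Then I handle $(\nabla_X\phi)Y$ and $(\nabla_X\phi)\operatorname{grad}\beta$ via (\ref{N6}); the latter becomes $\beta(g(X,\operatorname{grad}\beta)\xi - \eta(\operatorname{grad}\beta)X) = \beta(d\beta(X)\xi - \xi(\beta)X) = \beta\, d\beta(X)\xi$, where $\xi(\beta)=0$ kills the second piece — this is exactly where (\ref{ZETA}) enters and produces the $-\beta\,d\beta(X)\eta(Y)\xi$ term in the target. The term $d\beta((\nabla_X\phi)Y)\xi$ similarly expands to $\beta(g(X,Y)\,d\beta(\xi) - \eta(Y)\,d\beta(\phi X))\xi$; again $d\beta(\xi)=\xi(\beta)=0$, leaving a contribution I will need to reconcile with the target's $\beta\,d\beta(\phi Y)\phi X$ and $(\nabla_X d\beta)(\phi Y)\xi$ terms.

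The two genuinely second-order terms, $-\eta(Y)\phi\nabla_X\operatorname{grad}\beta$ and $(\nabla_X d\beta)(\phi Y)\xi$, I would leave unexpanded, since they appear verbatim in the target formula (\ref{C2}); there is no need to push further into $\operatorname{Hess}\beta$. After all substitutions, the final step is to collect like terms and verify they match (\ref{C2}) line by line — confirming in particular that the scalar coefficients $d\tau(X)/4 + 2\beta\,d\beta(X)$ on $Y$ and $6\beta\,d\beta(X)+d\tau(X)/2$ on $\eta(Y)\xi$ simply carry over unchanged from the differentiation of the $(\tau/4+\beta^2)$ and $(3\beta^2+\tau/2)$ coefficients. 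The main obstacle is not conceptual but organizational: keeping careful track of signs and of which of the several $\xi$-valued terms each simplification feeds into, and correctly invoking $\xi(\beta)=d\beta(\xi)=0$ at the two places it is needed to eliminate spurious terms. No single step is hard, but the sheer number of terms makes a disciplined term-by-term accounting essential.
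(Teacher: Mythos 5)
Your plan is the paper's proof: the paper likewise takes the raw Leibniz expansion displayed just before the lemma and substitutes (\ref{N6}), (\ref{N7}) and (\ref{ZETA}) term by term, and your treatment of $(\nabla _{X}\eta )(Y)=\beta g(\phi X,Y)$, of $(\nabla _{X}\phi )\operatorname{grad}\beta =\beta \,d\beta (X)\xi $, and of the two second-order terms left unexpanded is exactly what is needed. One slip to correct before the final bookkeeping closes: by (\ref{N6}) one has $d\beta ((\nabla _{X}\phi )Y)=\beta \left( g(X,Y)\,\xi (\beta )-\eta (Y)\,d\beta (X)\right) =-\beta \,\eta (Y)\,d\beta (X)$, not $-\beta \,\eta (Y)\,d\beta (\phi X)$; this term therefore supplies the second copy of $-\beta \,\eta (Y)\,d\beta (X)\,\xi $ in (\ref{C2}) and requires no reconciliation with $\beta \,d\beta (\phi Y)\phi X$, which arises separately from $d\beta (\phi Y)\nabla _{X}\xi =\beta \,d\beta (\phi Y)\phi X$.
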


\begin{lemma}
\label{L3} For the the function $\beta $ of $3$-dimensional
quasi-Para-Sasakian manifold, the following equality holds%
\begin{equation}
\nabla _{\xi }\func{grad}\beta =\beta \phi \func{grad}\beta .  \label{C3}
\end{equation}
\end{lemma}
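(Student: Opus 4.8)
The plan is to pin down $\nabla _{\xi }\func{grad}\beta$ by testing it against an arbitrary vector field $Y$ and then invoking nondegeneracy of $g$. The structural inputs I expect to need are only (\ref{N7}), the skew-symmetry of the fundamental form $\Phi $, and the identity (\ref{ZETA}); the covariant derivative formula (\ref{C2}) is not required. The guiding idea is that the tangential information about $\nabla _{\xi }\func{grad}\beta $ is cheapest to extract through the symmetry of the Hessian of $\beta $, rather than by a direct differentiation that would demand second covariant derivatives of $\beta $.

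First I would start from (\ref{ZETA}), i.e. $\xi (\beta )=g(\func{grad}\beta ,\xi )=0$, and differentiate the function $g(\func{grad}\beta ,\xi )$ along an arbitrary $Y$. Metric compatibility of $\nabla $ gives
\begin{equation*}
0=g(\nabla _{Y}\func{grad}\beta ,\xi )+g(\func{grad}\beta ,\nabla _{Y}\xi ),
\end{equation*}
and substituting $\nabla _{Y}\xi =\beta \phi Y$ from (\ref{N7}) yields $g(\nabla _{Y}\func{grad}\beta ,\xi )=-\beta \,g(\func{grad}\beta ,\phi Y)$. Next I would use that the Hessian of $\beta $ is symmetric, since $\nabla $ is torsion-free, so that $g(\nabla _{\xi }\func{grad}\beta ,Y)=g(\nabla _{Y}\func{grad}\beta ,\xi )$. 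Combining these two displays gives
\begin{equation*}
g(\nabla _{\xi }\func{grad}\beta ,Y)=-\beta \,g(\func{grad}\beta ,\phi Y).
\end{equation*}

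Finally, since $\Phi (X,Y)=g(X,\phi Y)$ is skew-symmetric one has $g(\func{grad}\beta ,\phi Y)=-g(\phi \func{grad}\beta ,Y)$, whence
\begin{equation*}
g(\nabla _{\xi }\func{grad}\beta ,Y)=\beta \,g(\phi \func{grad}\beta ,Y)
\end{equation*}
for every $Y$; as $g$ is nondegenerate, the asserted equality (\ref{C3}) follows. Note that the $Y=\xi $ case is automatically consistent, because $\phi \xi =0$ forces both sides to vanish there. I do not anticipate a genuine obstacle: the argument is short once one commits to the Hessian-symmetry route, and the only place a poor first move could stall things is attempting to compute $\nabla _{\xi }\func{grad}\beta $ head-on, which needs data about second derivatives of $\beta $ that (\ref{N7}) and (\ref{ZETA}) do not directly supply.
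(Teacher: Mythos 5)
Your proof is correct and is essentially the paper's own argument in different clothing: the paper computes $[\xi ,X](\beta )$ in two ways (via $\xi (X(\beta ))-X(\xi (\beta ))$ and via $g(\func{grad}\beta ,\nabla _{\xi }X-\nabla _{X}\xi )$ with (\ref{N7})), which is precisely the symmetry of the Hessian of $\beta $ that you invoke explicitly, combined with $\xi (\beta )=0$ and the skew-adjointness of $\phi $. No substantive difference in method or in the ingredients used.
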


\begin{proof}
By virtue of (\ref{ZETA})$,$ we have%
\begin{equation}
\lbrack \xi ,X](\beta )=\xi (X(\beta ))-X(\xi (\beta ))=g(\nabla _{\xi }%
\func{grad}\beta ,X)+g(\func{grad}\beta ,\nabla _{\xi }X).  \label{C4}
\end{equation}%
By (\ref{N7}), we get 
\begin{equation}
\lbrack \xi ,X](\beta )=g(\func{grad}\beta ,[\xi ,X])=g(\func{grad}\beta
,\nabla _{\xi }X)+\beta g(\phi \func{grad}\beta ,X).  \label{C5}
\end{equation}%
The proof comes from (\ref{C4}) and (\ref{C5}).
\end{proof}

For any point $p$ $\in U\subset M$, there exists a local orthonormal $\phi $%
-basis $\left\{ e_{1}=\phi e_{2},\text{ }e_{2}=\phi e_{1},\text{ }e_{3}=\xi
\right\} $, where $g(e_{1},e_{1})$ $=$ $-g(e_{2},e_{2})=g(e_{3},e_{3})=1$.

For the sake of shortness, we will give followings%
\begin{eqnarray*}
\tau _{i} &=&d\tau (e_{i}), \\
\beta _{i} &=&d\beta (e_{i}) \\
\beta _{ij} &=&(\nabla _{e_{i}}d\beta )(e_{j}), \\
L_{ij} &=&(\nabla _{e_{i}}L)e_{j} \\
\func{grad}\beta &=&\beta _{1}e_{1}+\beta _{2}e_{2}+\beta _{3}e_{3}, \\
\nabla _{e_{i}}\func{grad}\beta &=&\beta _{i1}e_{1}+\beta _{i2}e_{2}+\beta
_{i3}e_{3}
\end{eqnarray*}%
for $1\leq i,j\leq 3,$ where $\tau _{i},$ $\beta _{i},~\beta _{ij}$ are the
functions and $L_{ij}$ are the vector fields on $U$. Also, one can easily
see that $\beta _{ij}=\beta _{ji}$. If we use (\ref{c1.1}), (\ref{N7}) and (%
\ref{m2}) in the following well known formula for semi-Riemannian manifolds 
\begin{equation*}
trace\left\{ Y\rightarrow (\nabla _{Y}Q)X\right\} =\frac{1}{2}\nabla _{X}\tau
\end{equation*}%
we obtain%
\begin{equation}
\xi (\tau )=0.  \label{c10}
\end{equation}%
From (\ref{ZETA}) and (\ref{c10}), we obtain%
\begin{equation}
\beta _{3}=0,~~\tau _{3}=0.  \label{c11}
\end{equation}%
(\ref{C3}) implies that%
\begin{equation}
\beta _{13}=\beta _{31}=-\beta \beta _{2},~~~\beta _{23}=\beta _{32}=-\beta
\beta _{1},~~~\beta _{33}=0.  \label{c12}
\end{equation}

\begin{lemma}
\label{L4}For a $3$-dimensional quasi-Para-Sasakian manifold, we have%
\begin{eqnarray}
L_{ij}-L_{ji} &=&0~\text{\ for }1\leq i,j\leq 3\Leftrightarrow  \notag \\
\tau _{1} &=&-20\beta \beta _{1},~~~\tau _{2}=-20\beta \beta _{2},~~~\beta
_{12}=\beta _{21}=0,~~~\beta _{22}=-\beta _{11}=\beta \left( 3\beta ^{2}+%
\frac{\tau }{2}\right) .  \label{c13}
\end{eqnarray}
\end{lemma}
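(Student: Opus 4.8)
The plan is to turn the operator condition $L_{ij}=L_{ji}$ into a system of scalar equations by evaluating the explicit formula \eqref{C2} of Lemma \ref{L2} on the adapted $\phi$-basis $\{e_1,e_2,e_3=\xi\}$. Since $(\nabla_XL)Y$ is already known in closed form, each $L_{ij}=(\nabla_{e_i}L)e_j$ becomes a concrete vector once I expand every term of \eqref{C2} in the basis, so that $L_{ij}-L_{ji}$ is a vector whose three components I simply set equal to zero. Because each resulting equation is obtained by equating a basis coefficient to zero, the argument is reversible and the stated equivalence is automatic; I need not treat the two implications separately.

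First I would assemble the substitution data. On the $\phi$-basis one has $\phi e_1=e_2$, $\phi e_2=e_1$, $\phi e_3=0$, $\eta(e_j)=\delta_{j3}$, and $g(\phi e_i,e_j)=0$ except for $g(\phi e_1,e_2)=-1$ and $g(\phi e_2,e_1)=1$. Since the signature on this basis is $g(e_1,e_1)=-g(e_2,e_2)=g(e_3,e_3)=1$, the gradient and its covariant derivatives must be expanded with the metric signs, namely $\func{grad}\beta=\beta_1e_1-\beta_2e_2+\beta_3e_3$ and $\nabla_{e_i}\func{grad}\beta=\beta_{i1}e_1-\beta_{i2}e_2+\beta_{i3}e_3$, whence $\phi\func{grad}\beta=-\beta_2e_1+\beta_1e_2$ and $\phi\nabla_{e_i}\func{grad}\beta=-\beta_{i2}e_1+\beta_{i1}e_2$. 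I would then feed in the facts already established: $\beta_3=\tau_3=0$ from \eqref{c11}, the mixed Hessian values $\beta_{13}=\beta_{31}=-\beta\beta_2$, $\beta_{23}=\beta_{32}=-\beta\beta_1$, $\beta_{33}=0$ from \eqref{c12}, and the symmetry $\beta_{ij}=\beta_{ji}$.

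With this in place only the three independent differences $L_{12}-L_{21}$, $L_{13}-L_{31}$, $L_{23}-L_{32}$ need to be examined (the diagonal cases are trivial and the remaining off-diagonal ones follow by antisymmetry). I expect the pair $(1,2)$ to produce $\tau_1=-20\beta\beta_1$ and $\tau_2=-20\beta\beta_2$ from its $e_2$- and $e_1$-components, and the relation $\beta_{22}-\beta_{11}=2\beta(3\beta^2+\tfrac{\tau}{2})$ from its $e_3$-component. The pairs $(1,3)$ and $(2,3)$ should then pin down the individual values $\beta_{11}=-\beta(3\beta^2+\tfrac{\tau}{2})$ and $\beta_{22}=\beta(3\beta^2+\tfrac{\tau}{2})$ together with $\beta_{12}=\beta_{21}=0$, while their $e_3$-components merely reproduce the two scalar-curvature identities. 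Collecting everything reproduces precisely the right-hand side of \eqref{c13}.

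The only genuine difficulty is arithmetic discipline rather than conceptual depth: the pseudo-Riemannian signs in the expansions of $\func{grad}\beta$ and $\nabla_{e_i}\func{grad}\beta$, combined with the many terms of \eqref{C2} that are annihilated by the factors $\eta(e_j)=\delta_{j3}$, $g(\phi e_i,e_j)$, or $\phi e_3=0$, make it easy to lose a sign or drop a contribution. I would guard against this by tabulating all nine vectors $L_{ij}$ before subtracting, and by using the fact that the $e_3$-components of all three pairs must return the same relations $\tau_1=-20\beta\beta_1$, $\tau_2=-20\beta\beta_2$ as an internal consistency check.
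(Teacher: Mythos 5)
Your proposal is correct and is essentially the paper's own proof: the paper likewise obtains \eqref{c13} by direct computation of $L_{12}-L_{21}$, $L_{13}-L_{31}$, $L_{23}-L_{32}$ from \eqref{C2} on the adapted basis using \eqref{c11} and \eqref{c12}, and then reads off the vanishing of the components. Your insistence on expanding $\func{grad}\beta$ and $\nabla_{e_i}\func{grad}\beta$ with the metric signs is the convention that actually reproduces the coefficients in \eqref{c14} (e.g.\ the $5\beta\beta_2$ in the $e_1$-component of $L_{12}-L_{21}$), so that care is warranted; the only slip is the closing remark that all three $e_3$-components return the $\tau$-identities, since the $(1,2)$ pair's $\xi$-component yields the Hessian relation instead, but this does not affect the argument.
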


\begin{proof}
By direct computations, using (\ref{C2}), (\ref{C3}), (\ref{c11}) and (\ref%
{c12}), we derive%
\begin{eqnarray}
L_{12}-L_{21} &=&-\left( \frac{\tau _{2}}{4}+5\beta \beta _{2}\right)
e_{1}+\left( \frac{\tau _{1}}{4}+5\beta \beta _{1}\right) e_{2}  \notag \\
&&+(\beta _{11}-\beta _{22}+\beta (\tau +6\beta ^{2}))\xi .  \notag \\
L_{13}-L_{31} &=&\beta _{12}e_{1}+\left( -\beta _{11}-\beta \left( 3\beta
^{2}+\frac{\tau }{2}\right) \right) e_{2}  \notag \\
&&+\left( -\frac{\tau _{1}}{4}-5\beta \beta _{1}\right) \xi .  \notag \\
L_{23}-L_{32} &=&\left( \beta _{22}-\beta \left( 3\beta ^{2}+\frac{\tau }{2}%
\right) \right) e_{1}-\beta _{12}e_{2}  \notag \\
&&+\left( -\frac{\tau _{2}}{4}-5\beta \beta _{2}\right) \xi .  \label{c14}
\end{eqnarray}%
The proof follows from (\ref{c14}).
\end{proof}

We know that a semi-Riemannian manifold is conformally flat$\Leftrightarrow
(\nabla _{X}L)Y-(\nabla _{Y}L)X=0$, for any vector fields $X$ and $Y.$
Hence, we can say that a $3$-dimensional quasi-Para-Sasakian manifold is
conformally flat if and only if (\ref{c13}) holds. By (\ref{c13}), we can
give following result.

\begin{theorem}
\label{L5}A $3$-dimensional quasi-Para-Sasakian manifold is conformally flat
if and only if the function $\beta $ satisfies the followings%
\begin{eqnarray}
\tau +10\beta ^{2} &=&\text{const.,}  \notag \\
(\nabla _{X}d\beta )(Y) &=&-\beta \left( 3\beta ^{2}+\frac{\tau }{2}\right)
(g(X,Y)-\eta (X)\eta (Y))  \notag \\
&&-\beta \eta (X)d\beta (\phi Y)-\beta \eta (Y)d\beta (\phi X).  \label{C15}
\end{eqnarray}
\end{theorem}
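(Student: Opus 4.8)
The plan is to translate the characterization in Lemma~\ref{L4}, which is expressed in terms of the $\phi$-basis components $\tau_i$, $\beta_i$, $\beta_{ij}$, into the coordinate-free tensorial identities~(\ref{C15}).

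First I would recall the conformal-flatness criterion $(\nabla_X L)Y-(\nabla_Y L)X=0$; evaluating this on basis pairs $(e_i,e_j)$ shows it is equivalent to $L_{ij}-L_{ji}=0$ for all $1\le i,j\le 3$, which by Lemma~\ref{L4} is equivalent to the four scalar relations in~(\ref{c13}).

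Next I would verify the first line of~(\ref{C15}). From~(\ref{c13}) we have $\tau_1=-20\beta\beta_1$ and $\tau_2=-20\beta\beta_2$, i.e. $d\tau(e_1)=-20\beta\,d\beta(e_1)$ and $d\tau(e_2)=-20\beta\,d\beta(e_2)$; combined with the $\xi$-direction vanishing $\tau_3=0=\beta_3$ from~(\ref{c11}), this yields $d\tau(X)=-20\beta\,d\beta(X)=-10\,d(\beta^2)(X)$ for every $X$. Hence $d(\tau+10\beta^2)=0$, so $\tau+10\beta^2$ is constant, which is the first assertion.

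The main work is reconstructing the Hessian identity, the second equation of~(\ref{C15}). Here I would assemble the full symmetric bilinear form $(\nabla_X d\beta)(Y)$ from its nine components $\beta_{ij}$, evaluating the proposed right-hand side on each pair $(e_i,e_j)$ and matching. The diagonal pair entries come from the last relation in~(\ref{c13}), namely $\beta_{22}=-\beta_{11}=\beta(3\beta^2+\tfrac{\tau}{2})$, the off-diagonal $(1,2)$ entry from $\beta_{12}=\beta_{21}=0$, and the mixed entries $\beta_{13},\beta_{23},\beta_{33}$ from Lemma~\ref{L3} as recorded in~(\ref{c12}): $\beta_{13}=-\beta\beta_2$, $\beta_{23}=-\beta\beta_1$, $\beta_{33}=0$. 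The bookkeeping is driven by the metric signature $g(e_1,e_1)=-g(e_2,e_2)=g(e_3,e_3)=1$, so the term $-\beta(3\beta^2+\tfrac{\tau}{2})(g(X,Y)-\eta(X)\eta(Y))$ must reproduce $\beta_{11}$ and $\beta_{22}$ with the correct signs (note $g(e_1,e_1)-\eta(e_1)^2=1$ while $g(e_2,e_2)-\eta(e_2)^2=-1$, accounting for the sign flip between $\beta_{11}$ and $\beta_{22}$), while the two terms $-\beta\eta(X)d\beta(\phi Y)-\beta\eta(Y)d\beta(\phi X)$, using $\phi e_1=e_2$, $\phi e_2=e_1$, $\phi\xi=0$, must account precisely for the mixed components $\beta_{i3}$ through~(\ref{c12}).

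The step I expect to be most delicate is keeping the signs consistent when $\phi$ and the indefinite metric interact: since $\phi$ swaps $e_1\leftrightarrow e_2$ and $g$ assigns opposite signs to these directions, one must be careful that $d\beta(\phi e_1)=\beta_2$ and $d\beta(\phi e_2)=\beta_1$ feed into the mixed Hessian entries with exactly the signs dictated by~(\ref{c12}). Once every pair $(e_i,e_j)$ is checked, bilinearity and symmetry of both sides upgrade the componentwise equality to the claimed tensorial identity, completing the equivalence.
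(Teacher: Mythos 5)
Your proposal is correct and follows exactly the route the paper intends: the paper derives Theorem~\ref{L5} directly from Lemma~\ref{L4} via the criterion $(\nabla_XL)Y-(\nabla_YL)X=0$, stating only ``By (\ref{c13}), we can give following result,'' and your componentwise translation of (\ref{c13}) together with (\ref{c11}) and (\ref{c12}) into the two identities of (\ref{C15}) is precisely the omitted bookkeeping. Your sign checks (e.g.\ $g(e_1,e_1)-\eta(e_1)^2=1$ versus $g(e_2,e_2)-\eta(e_2)^2=-1$ matching $\beta_{11}=-\beta_{22}$, and $\phi e_1=e_2$, $\phi e_2=e_1$ feeding the mixed entries $\beta_{i3}$) all come out consistently, so nothing is missing.
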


\begin{theorem}
\label{L6}For a $3$-dimensional quasi-Para-Sasakian manifold $M$ with $\beta
=$const., the following assertions are equivalent to each other:
\end{theorem}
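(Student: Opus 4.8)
The plan is to prove the four assertions equivalent through the cycle $(i)\Rightarrow(ii)\Rightarrow(iii)\Rightarrow(iv)\Rightarrow(i)$, with Theorem~\ref{L5} and the curvature identities (\ref{m1})--(\ref{m2}) doing all the work. The one general fact I rely on is that in dimension three the Weyl tensor vanishes, so (as recalled just before Theorem~\ref{L5}) conformal flatness is equivalent to $(\nabla_X L)Y-(\nabla_Y L)X=0$ for the operator $L=Q-\tfrac{\tau}{4}\mathrm{Id}$ of (\ref{C0}).

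For $(i)\Rightarrow(ii)$ I observe that $\nabla R=0$ forces $\nabla Q=0$ and hence $d\tau=0$; then $\nabla L=\nabla Q-\tfrac14 d\tau\,\mathrm{Id}=0$, so in particular $(\nabla_X L)Y-(\nabla_Y L)X=0$ and $M$ is conformally flat with $\tau$ constant. For $(ii)\Rightarrow(iii)$ I invoke the first conclusion of Theorem~\ref{L5}, $\tau+10\beta^2=\mathrm{const}$; together with $\tau=\mathrm{const}$ this gives $\beta^2=\mathrm{const}$, whence $\beta=\mathrm{const}$ by continuity.

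The core step is $(iii)\Rightarrow(iv)$. Since $\beta=\mathrm{const}$ we have $d\beta=0$, and the second relation of Theorem~\ref{L5} collapses to the algebraic identity $\beta\bigl(3\beta^2+\tfrac{\tau}{2}\bigr)\bigl(g(X,Y)-\eta(X)\eta(Y)\bigr)=0$; testing it on a vector orthogonal to $\xi$ forces $\beta\bigl(3\beta^2+\tfrac{\tau}{2}\bigr)=0$, which is exactly the dichotomy. If $\beta\neq0$ then $\tau=-6\beta^2$, and substituting into (\ref{m1}) (all $d\beta$-terms having vanished) cancels every $\eta$- and $\xi$-term and leaves $R(X,Y)Z=-\beta^2\bigl(g(Y,Z)X-g(X,Z)Y\bigr)$, i.e.\ constant curvature $K=-\beta^2<0$; Theorem~\ref{mert} then presents the structure as a homothetic deformation of a para-Sasakian one. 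If $\beta=0$ then (\ref{N7}) gives $\nabla_X\xi=0$, so $\xi$ is parallel and $\ker\eta$ is a parallel integrable distribution; the local de~Rham splitting yields $M\cong R\times N^2$, with $(\phi,g)$ restricting to a para-Kaehler structure on $N^2$.

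Finally $(iv)\Rightarrow(i)$ is immediate for $\beta\neq0$, since a space of constant curvature is locally symmetric. For $\beta=0$ the tensors $\xi,\eta,g$ are all parallel, so covariantly differentiating (\ref{m1}) (with $\beta=0$) shows $\nabla R$ is a fixed expression times $d\tau$; conformal flatness having forced $\tau=\mathrm{const}$, we get $\nabla R=0$. I expect the only genuine obstacle in this branch: one must check that conformal flatness really pins the scalar curvature of the surface factor to a constant (equivalently $\tau=\mathrm{const}$, via $\tau=2K_{N^2}$ for the product), so that $R\times N^2$ is locally symmetric rather than a mere Riemannian product with variable Gauss curvature.
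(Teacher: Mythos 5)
Your argument follows essentially the same route as the paper's: $(i)\Rightarrow(ii)$ from $\dim M=3$, $(ii)\Leftrightarrow(iii)$ via the relation $\tau+10\beta^{2}=\mathrm{const.}$ of Theorem \ref{L5}, and $(iii)\Rightarrow(iv)$ by extracting $\beta\left(3\beta^{2}+\frac{\tau}{2}\right)=0$ from (\ref{C15}) and splitting into the two cases. The only stylistic difference is in the branch $\beta\neq0$: you substitute $\tau=-6\beta^{2}$ directly into (\ref{m1}) to read off $R(X,Y)Z=-\beta^{2}\left(g(Y,Z)X-g(X,Z)Y\right)$, whereas the paper substitutes into (\ref{m2}) to obtain the Einstein condition $S=\frac{\tau}{3}g$ and concludes constant curvature from there; both are correct and both then invoke Theorem \ref{mert}. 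Your de Rham splitting argument for $\beta=0$ just makes explicit what the paper asserts.

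The point you flag in $(iv)\Rightarrow(i)$ for $\beta=0$ deserves emphasis: it is a genuine issue, and the paper's proof does not resolve it either (it says only that ``one can easily deduce'' this implication). A local product $R\times N^{2}$ with $N^{2}$ para-Kaehlerian is locally symmetric only if the Gauss curvature of $N^{2}$ is constant, and nothing in statement $(iv)$ as written guarantees this; the structural identities give only $\xi(\tau)=0$, not $d\tau=0$. Note also that your proposed repair --- ``conformal flatness having forced $\tau=\mathrm{const.}$'' --- is circular, since in proving $(iv)\Rightarrow(i)$ conformal flatness is not among the hypotheses; and rerouting through $(iv)\Rightarrow(ii)$ or $(iv)\Rightarrow(iii)$ does not help, because a product $R\times N^{2}$ is conformally flat precisely when $N^{2}$ has constant curvature. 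So on this step your proof is exactly as complete as the paper's, and closing the cycle rigorously would require either reading constancy of the curvature of the $2$-dimensional factor into $(iv)$ or strengthening its statement.
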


$i)$\textit{\ }$M$\textit{\ is locally symmetric.}

$ii)$\textit{\ }$M$\textit{\ is conformally flat and its scalar curvature }$%
\tau $\textit{\ is const.,}

$iii)$\textit{\ }$M$\textit{\ is conformally flat and }$\beta $\textit{\ is
const.,}

$iv)\bullet $\textit{\ If }$\beta =0$\textit{, then }$M$\textit{\ is a
paracosymplectic manifold which is locally a product of the real line }$R$%
\textit{\ and a }$2$\textit{-dimensional para-Kaehlerian manifold or}

$\bullet $\textit{\ If }$\beta \neq 0$\textit{, then }$M$\textit{\ is of
constant negative curvature and the quasi-para-Sasakian structure can be
obtained by a homothetic deformation of a para-Sasakian structure.}

\begin{proof}
First of all, $(i)$ implies $(ii)$ because of the dim$M=3$. From (\ref{C15}%
), one can see $(ii)\Leftrightarrow (iii)$. Now, we will show $(iii)$
implies $(iv)$. If $\beta $\textit{\ }is const.,\textit{\ }using (\ref{C15}%
), we get $\beta \left( 3\beta ^{2}+\frac{\tau }{2}\right) =0$ and $\tau $
is const. Now there are two possibilities. If $\beta =0$\textit{, }then $M$
is a paracosymplectic manifold which is locally a product of the real line $%
R $ and a $2$-dimensional para-Kaehlerian manifold. If $\beta \neq 0$, then $%
\tau =-6\beta ^{2}$, namely $M$ has constant negative curvature. By using $%
\tau =-6\beta ^{2}$ in (\ref{m2}), we get $M$ is Einstein since $S=\frac{%
\tau }{3}g$. Using$\ $Theorem \ref{mert}, one can say that the
quasi-para-Sasakian structure can be obtained by a homothetic deformation of
a para-Sasakian structure. One can easily deduce that $(iv)\Rightarrow (i)$.
\end{proof}


\begin{thebibliography}{99}
\bibitem{bejan} C. L. Bejan, M. Crasmareanu, \emph{Second order parallel
tensors and Ricci solitons in 3-dimensional normal paracontact geometry},
Ann. Global Anal. Geom. \textbf{46(2)} (2014), 117-127.

\bibitem{BL1} D. E. Blair, \emph{The theory of quasi-Sasakian structures},
J. Differential Geom. \textbf{1} (1967), 331-345.

\bibitem{blair2} D. E. Blair,\emph{\ Riemannian Geometry of Contact and
Symplectic Manifolds}, Progress in Mathematics Vol. \textbf{203, }Birkh\"{a}%
user, Boston, 2002.

\bibitem{Biz} B. Cappelletti-Montano, I. K\"{u}peli Erken, C. Murathan, 
\emph{Nullity conditions in paracontact geometry}, Diff. Geom. Appl. \textbf{%
30} (2012), 665--693.

\bibitem{DACKO} P. Dacko, \emph{On almost para-cosymplectic manifolds},
Tsukuba J. Math. \textbf{28} (2004), 193-213.

\bibitem{erdem} S. Erdem, \emph{On almost (para)contact (hyperbolic) metric
manifolds and harmonicity of (}$\varphi ,\varphi ^{\prime })$-\emph{%
holomorphic maps between them}, Houston J. Math. \textbf{28} (2002), 21-45.

\bibitem{KANEMAKI} S. Kanemaki, \emph{Quasi-Sasakian manifolds}, Tohoku
Math. J. \textbf{29} (1977), 227-233.

\bibitem{kaneyuki1} S. Kaneyuki , F.~L.~Williams, \emph{Almost paracontact
and parahodge structures on manifolds}, Nagoya Math. J 1985; 99: 173--187.

\bibitem{irem1} I. K\"{u}peli Erken,\emph{\ Some classes of 3-dimensional
normal almost paracontact metric manifolds}, Honam Math. J.\textbf{\ 37, }%
no. 4 (2015), 457-468

\bibitem{irem2} I. K\"{u}peli Erken, \emph{On normal almost paracontact
metric manifolds of dimension 3}, Facta Univ. Ser. Math. Inform. \textbf{30, 
}no. 5 (2015), 777-788.

\bibitem{irem3} I. K\"{u}peli Erken,\emph{Curvature properties of
quasi-para-Sasakian manifolds}, submitted.

\bibitem{OL1} Z. Olszak, \emph{Curvature properties of quasi-Sasakian
manifolds}, Tensor 38 (1982), 19-28.

\bibitem{olszak} Z. Olszak, \emph{Normal almost contact metric manifolds of
dimension three}, Ann. Polon. Math. XLVII (1986), 41-50.

\bibitem{ols2} Z. Olszak, \emph{On three-dimensional conformally flat
quasi-Sasakian manifolds}, Period. Math. Hungar. 33(2) (1996), 105-113.

\bibitem{TAN1} S. Tanno, \emph{Quasi-Sasakian structures of rank 2p + 1}, J.
Differential Geom. 5 (1971), 317-324.

\bibitem{Welyczko} J. We\l yczko, \emph{On basic curvature identities for
almost (para)contact metric manifolds}. Available in Arxiv: 1209.4731 $\left[
\text{math. DG}\right] $.

\bibitem{Welyczko1} J. Welyczko, \emph{On Legendre Curves in 3-Dimensional
Normal Almost Paracontact Metric Manifolds}, Result. Math. \textbf{54}
(2009), 377-387.

\bibitem{Za} S. Zamkovoy, \emph{Canonical connections on paracontact
manifolds}, Ann. Glob. Anal. Geom. \textbf{36} (2009), 37-60.
\end{thebibliography}
\end{document}